\numberwithin{equation}{section}
\newtheorem{thm}{Theorem}[section]
\newtheorem{lemma}[thm]{Lemma}
\newtheorem{prop}[thm]{Proposition}
\newcommand\todo[1]{\textcolor{red}{#1}}
\newcommand\comm[1]{\textcolor{blue}{#1}}
\newcommand{\pd}{\partial}
\newcommand{\ol}{\overline}
\newcommand{\e}{\varepsilon}
\def \CC{\mathbb{C}}
\def \RR{\mathbb{R}}
\def \f{\tilde{f}}
\def \g{\tilde{g}}
\def \h{\tilde{h}}
\def \z{\tilde{\zeta}}
\def \F{\tilde{F}}
\def \p{\partial}
\begin{document}

\title[minimal graphs]{On the growth of solutions to the minimal surface equation over domains containing a halfplane}

\author[Erik Lundberg]{Erik Lundberg}

\author[Allen Weitsman]{Allen Weitsman}

\address{Department of Mathematics\\ Purdue University\\
West Lafayette, IN 47907-1395}
\address{Email: elundber@math.purdue.edu}
\address{Email: weitsman@purdue.edu}


\begin{abstract}
We consider minimal graphs $u = u(x,y) > 0$ over unbounded domains $D$ with
$u = 0$ on $\pd D$.  
Assuming $D$ contains a sector properly containing a halfplane, we obtain estimates on growth
and provide examples illustrating a range of growth.

{\bf Keywords:} minimal surface, harmonic mapping, asymptotics

{\bf MSC:} 49Q05
\end{abstract}

\maketitle

 \medskip
{\small
\noindent Corresponding author: Erik Lundberg \\
email: elundber@math.purdue.edu, elundberg9@gmail.com \\
phone: (765) 494-1965 \\
fax: (765) 494-0548}


\section{Introduction} Let $D$ be an unbounded plane domain. 
  In this paper we consider the boundary value problem
for the minimal surface equation
\begin{equation}
\label{eq:bdryvalueprob}
\left\{
\begin{aligned}
  &\text{div} \frac{\nabla u}{\sqrt{1+|\nabla u|^2}}=0 \quad \text{and } u>0 \quad \text{in}\ D\\
        & u=0\quad \text{on}\ \partial D
 \end{aligned}\right.
 \end{equation}
 We shall study the constraints on growth of nontrivial solutions to (\ref{eq:bdryvalueprob}) as determined
 by the maximum 
 $$
 M(r)={\max}\ u(x,y),
 $$ 
where the max is taken over the values $r=\sqrt{x^2+y^2}$ and $(x,y)\in D$.

 Perhaps the first relevant theorem in this direction was proved by Nitsche  \cite[p. 256]{Nitsche} who 
observed that if $D$ is contained in a sector of opening strictly less than $\pi$, 
 then $u\equiv 0$.  For domains contained in a half plane, but not contained in any such sector, there
 are a host of solutions to (\ref{eq:bdryvalueprob}) which will be discussed later.  However, in this case,
 it has been shown \cite{Weitsman2005} that if $D$ is bounded by a Jordan arc,
$$
  Cr\leq M(r)\leq e^{Cr}\quad (r>r_0)
 $$
 for some positive constants $C$ and $r_0$.  
 
 If, on the other hand, the domain $D$ contains a sector of opening $\alpha$ bigger than $\pi$, we shall show that
 the growth of $M(r)$ is at most linear (see Theorem \ref{thm:bound} in Section 2).  
 Regarding the bound from below, with the order 
 $\rho$ of $u$ defined
 by
 $$
 \rho=\underset {r\to\infty}\lim\sup\frac{\log M(r)}{\log r},
 $$
 it follows 
 by using the module estimates of Miklyukov \cite{Miklyukov} (see also chapter 9 in \cite{Miklyukovbook}) 
 as in \cite{Weitsman2005a} that if $D$ omits a sector of opening $2\pi -\alpha $, ($\pi\leq\alpha\leq 2\pi$,
the omitted set in the case $\alpha=2\pi$ being a line), then
the order of any nontrivial solution to (\ref{eq:bdryvalueprob}) is at least
$\pi/\alpha$,
 
 The paper concludes with a list of problems and conjectures.

\section{Estimates on Growth}

For later convenience we shall use complex notation $z=x+iy$ for points $(x,y)$ when describing solutions to
the minimal surface equation.  As such, 
we are given a minimal graph with positive height function $u(z)$ over a domain $D$ as in (\ref{eq:bdryvalueprob}).

\begin{thm}\label{thm:bound}
 Let $D$ be a domain  whose boundary is a Jordan arc, and $D$ contains a sector $S_\lambda := \{ z: |\arg z| \leq \lambda\}$, with $\lambda > \pi / 2$. 
 With $M(r)$ defined as above, if $u$ satisfies (\ref{eq:bdryvalueprob}) in $D$ and vanishes on the boundary $\pd D$, then there exist positive constants $K$ and $R$ such that
\begin{equation}
\label{eq:bounds}
	 M(r) \leq K r, \quad |z| > R.
\end{equation}
\end{thm}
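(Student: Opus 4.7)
The plan is to establish the linear upper bound $M(r)\le Kr$ by a barrier comparison for the minimal surface equation, making essential use of the strict inclusion $D\supset S_\lambda$ with $\lambda>\pi/2$.

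First I would perform a geometric reduction. After a rotation, the complement $\CC\setminus D$ lies in the closed sector $T:=\{z:|\arg(-z)|\le\pi-\lambda\}$, which is strictly inside the open left halfplane since $\pi-\lambda<\pi/2$; hence every $\zeta\in T$ satisfies $\re\zeta\le -|\cos\lambda|\,|\zeta|$. In particular, for each $\theta$ with $|\theta|\le\lambda-\pi/2$, the rotated halfplane $H_\theta:=\{\re(e^{-i\theta}z)>0\}$ is entirely contained in $D$, and more generally each translated halfplane $\{\re(e^{-i\theta}z)>-a\}$ is contained in $D$ outside a disk of radius $O(a/|\cos\lambda|)$. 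Equivalently, every point $z_0\in D$ with $|z_0|$ sufficiently large and lying in a slightly smaller sub-sector $S_{\lambda'}$ ($\pi/2<\lambda'<\lambda$) satisfies $\mathrm{dist}(z_0,\partial D)\ge c_0|z_0|$ for some $c_0>0$.

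Next I would use tilted-halfplane minimal graphs as barriers. Over $H_\theta$, the tilted plane $v_\theta(z):=K\re(e^{-i\theta}z)$ is an exact solution of the minimal surface equation with $v_\theta=0$ on $\partial H_\theta$ and linear growth in $|z|$. A direct maximum-principle comparison of $u$ and $v_\theta$ on $H_\theta$ goes the wrong way, since $u>0=v_\theta$ on $\partial H_\theta\subset D$, so one obtains only the lower bound $u\ge v_\theta$ (consistent with the linear \emph{lower} bound discussed in the introduction). To extract an upper bound I would translate the barrier outward, working with $v_{\theta,a}(z):=K(\re(e^{-i\theta}z)+a)$ over the shifted halfplane $H_{\theta,-a}:=\{\re(e^{-i\theta}z)>-a\}$, which is contained in $D$ outside a compact set, and applying the maximum principle on the bounded subdomains $H_{\theta,-a}\cap B_R$ with $R\to\infty$. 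After suitable choices of $K$ and $a$, this should give $u(z_0)\le K|z_0|$ for $z_0\in S_{\lambda'}$.

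The main obstacle is constructing a genuinely linearly-growing upper barrier. Radially symmetric supersolutions of the minimal surface equation grow at most like $O(\log r)$ (the catenoid), so no rotationally invariant barrier can do the job; the argument must exploit the directional room in $D$ created by the assumption $\lambda>\pi/2$, which is precisely what guarantees that the shifted halfplanes $H_{\theta,-a}$ (or an analogous Scherk-type barrier that blows up along a line outside $D$) can be fitted inside $D\setminus K_0$ for some compact $K_0$. Once linear control is established on the interior sub-sector $S_{\lambda'}$, any point of $D$ outside $S_{\lambda'}$ lies within distance $O(|z|)$ of $\partial D$, and there a separate local half-disk barrier (matching the boundary values $u=0$) yields the linear bound as well, completing the estimate $M(r)\le Kr$. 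When $\lambda=\pi/2$ this geometric room disappears, which is consistent with the exponential growth permitted in \cite{Weitsman2005}.
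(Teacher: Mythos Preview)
Your proposal has a genuine gap in the crucial step: obtaining the linear upper bound inside a sub-sector $S_{\lambda'}$ by barrier comparison. You correctly observe that the tilted planes $v_\theta(z)=K\,\Re(e^{-i\theta}z)$ yield only a lower bound, since $u>0=v_\theta$ on $\partial H_\theta\subset D$. But the proposed fix---translating to $v_{\theta,a}$ over the shifted halfplane $H_{\theta,-a}$---does not repair this. On $\partial H_{\theta,-a}$ (which, outside a compact set, still lies inside $D$; indeed $D\supset S_\lambda$ with $\lambda>\pi/2$ means $D$ is not contained in any halfplane) you again have $u>0=v_{\theta,a}$, so the comparison points the wrong way. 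Restricting to $H_{\theta,-a}\cap B_R$ and letting $R\to\infty$ is circular: on the outer arc $\partial B_R$ you would need $u\le v_{\theta,a}\approx KR$, which is exactly what you are trying to prove. The suggestion of a ``Scherk-type barrier that blows up along a line outside $D$'' cannot be carried out either, since no line lies outside $D$: every line meets $S_\lambda\subset D$. In short, no halfplane- or strip-based supersolution fits the geometry, and the proposal supplies no alternative mechanism for the interior estimate.

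The paper's argument for the interior bound is entirely different and is not a barrier argument. It parametrizes the minimal graph in isothermal coordinates over the right half $\zeta$-plane, so that the height pulls back to the positive harmonic function $U(\zeta)=2\,\Re\zeta$ and the base projection is a harmonic map $f=h+\overline{g}$ with the constraint $g'=-1/h'$. After a conformal change $\psi$ one obtains $\tilde f=\tilde h+\overline{\tilde g}:H\to H$ whose holomorphic completion $\tilde F=\tilde h+\tilde g$ has positive real part vanishing on $\partial H$, hence $\tilde F(\tilde\zeta)=\tilde\zeta$. A theorem of Tsuji gives the angular limit $\psi'(\tilde\zeta)\to c\ge 0$, and solving the resulting quadratic for $\tilde h'$ yields $|\tilde f(\tilde\zeta)|\asymp|\tilde\zeta|$ in sectors, which translates back to $u(z)/|z|=O(1)$ in $S_\alpha$. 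Only \emph{after} this interior estimate does the paper invoke a comparison argument---Hwang's Phragm\'en--Lindel\"of theorem---on the residual region $D\setminus\overline{S_\alpha}$, where the now-known linear bound on $\partial S_\alpha$ together with $u=0$ on $\partial D$ provides the boundary control. Your ``local half-disk barrier'' for this second step would face the same boundary-value difficulty unless the interior estimate is already in hand.
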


Throughout, we will make use of the parametrization in isothermal coordinates by the 
Weierstrass functions $\left( x(\zeta), y (\zeta ), U (\zeta) \right)$
with $\zeta$ in the right half plane $H$, $U(\zeta) = u(x(\zeta),y(\zeta))$ and (up to additive constants)

\begin{equation}
\label{eq:Weierstrass}
\left\{
\begin{aligned}
	x(\zeta) &= \Re e \, \frac{1}{2} \int_{\zeta_0}^{\zeta} \omega(\bar{\zeta}) (1-G^2(\bar{\zeta})) d \bar{\zeta} \\
	y(\zeta) &= \Re e \, \frac{i}{2} \int_{\zeta_0}^{\zeta} \omega(\bar{\zeta}) (1+G^2(\bar{\zeta})) d \bar{\zeta} \\
	U(\zeta) &= \Re e \, \int_{\zeta_0}^{\zeta} \omega(\bar{\zeta}) G(\bar{\zeta}) d \bar{\zeta} \\
\end{aligned}\right.
\end{equation}

With this parameterization, the height function $U(\zeta)$ pulled back to the halfplane $H$
becomes a positive harmonic function in $H$ which is 0 on the imaginary 
axis, and thus  is simply $U(\zeta) = C \Re e \{ \zeta \}$ for a real positive constant $C$.
We may assume without loss of generality that $C=2$.

Since $f(\zeta ) := x(\zeta )+iy(\zeta )$ is harmonic in $H$, there exist analytic functions
$h(\zeta )$ and $g(\zeta )$ in $H$ such that 

$$f(\zeta) = h(\zeta) + \ol{g(\zeta)}.$$
With this formulation, the height function then satisfies
$$
U(\zeta )= 2\Re e\, i\int \sqrt{h'(\zeta )g'(\zeta )}\,d\zeta ,
$$
and since $U(\zeta )=\zeta $ in (\ref{eq:Weierstrass}), it follows  that
\begin{equation}\label{eq:diff}
 g'(\zeta) = - \frac{1}{h'(\zeta)}.
\end{equation}

\subsection{Proof of Theorem \ref{thm:bound}}

First we establish the bound (\ref{eq:bounds}) inside a sector.

\begin{lemma}\label{lemma:angle}
 Let $S_\alpha := \{ z: |\arg z | \leq \alpha < \pi/2 \}$ be a sector contained in $H \subset D$.
 Then for some $K>0$ the upper bound (\ref{eq:bounds}) holds in $S_\alpha$ for all $r$ sufficiently large:
 $$\max_{|z|=r, z \in S_\alpha} u(z) \leq Kr.$$
\end{lemma}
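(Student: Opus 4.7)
The plan is to transfer the problem to the $\zeta$-plane via the Weierstrass parametrization and then apply the comparison principle for the minimal surface equation. Writing $\zeta = f^{-1}(z)$, the identity $u(z) = 2\Re \zeta$ converts the desired bound $u(z) \leq K r$ on $S_\alpha$ into the geometric statement $\Re \zeta \leq (K/2)|f(\zeta)|$ whenever $f(\zeta) \in S_\alpha$. Since $\alpha < \pi/2$ forces $\Re z \geq |z|\cos\alpha$ throughout $S_\alpha$, it in fact suffices to establish the stronger pointwise bound $u(z) \leq K' \Re z$ on $S_\alpha$ for some constant $K'$.

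I would then try to prove this stronger inequality by comparing $u$ with the linear minimal graph $v(z) = K'\Re z + c$ on the region $\Omega_{R_0} := S_\alpha \cap \{|z| \geq R_0\}$. The inner arc $\{|z|=R_0\} \cap S_\alpha$ is handled by taking $c$ larger than $\max_{|z|=R_0} u$, which is finite by interior regularity of $u$. The maximum principle for the minimal surface equation then reduces the lemma to proving the same linear bound on the two lateral boundary rays $\{r e^{\pm i\alpha} : r \geq R_0\}$.

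The main obstacle is producing this linear bound on the rays without circular reasoning. My approach is to use the Weierstrass data directly: since $g' = -1/h'$ and the immersion is orientation-preserving, the Jacobian $|h'|^2 - |g'|^2 = |h'|^2 - 1/|h'|^2$ is positive, so $|h'| > 1$ throughout the parameter half-plane and $\log h'$ is a well-defined analytic function there with nonnegative real part. The Herglotz representation of $\log|h'|$ as a nonnegative harmonic function on $H$, together with integration of $h'$ along suitable paths and the subordination $|g'| = 1/|h'|$ of $g$ to $h$, should furnish a lower bound for $|f(\zeta)|$ in terms of $\Re \zeta$ along the preimages of the lateral rays. Interfacing this analytic estimate with the image-side constraint $f(\zeta) \in \{r e^{\pm i \alpha}\}$ is the main technical difficulty; a possible fallback is to bootstrap the argument by first handling a slightly wider auxiliary sector $S_{\alpha_1}$ with $\alpha < \alpha_1 < \pi/2$.
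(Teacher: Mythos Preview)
Your reduction to a comparison argument leaves the essential difficulty untouched. On the unbounded region $\Omega_{R_0}=S_\alpha\cap\{|z|\ge R_0\}$ the maximum principle is not available without a Phragm\'en--Lindel\"of theorem for the minimal surface operator (such as Hwang's Theorem~A quoted later in the paper), and the hypotheses of any such result already require a linear bound for $u$ along the lateral rays $\{re^{\pm i\alpha}\}$. Thus everything hinges on that lateral estimate, which you identify as the ``main obstacle'' but do not actually prove. Knowing that $\log|h'|\ge 0$ is harmonic on $H$ gives pointwise control of $|h'|$, but turning this into a lower bound for $|f(\zeta)|$ \emph{along the preimage of a prescribed ray in the $z$-plane}---a curve in the $\zeta$-plane about which nothing is known a priori---is precisely the heart of the matter, not a routine step. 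Your fallback of passing to a wider sector $S_{\alpha_1}$ with $\alpha<\alpha_1<\pi/2$ is circular: the lemma is asserted for every $\alpha<\pi/2$, so one cannot assume it at $\alpha_1$ in order to derive it at $\alpha$, and iterating only pushes $\alpha_1$ toward $\pi/2$ without ever closing.

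The paper avoids the preimage-of-rays issue by a second uniformization. One precomposes $f$ with a conformal map $\psi:H\to P:=f^{-1}(H)$; the analytic completion $\tilde F=\tilde h+\tilde g$ of $\tilde f=f\circ\psi$ then has $\Re\tilde F=\Re\tilde f>0$ on $H$ and $=0$ on $\partial H$, forcing $\tilde F(\tilde\zeta)=\tilde\zeta$. Combining $\tilde F'=1$ with the chain-rule form of $g'=-1/h'$ gives a quadratic for $\tilde h'$ in terms of $\psi'$, and since $\psi:H\to H$, its angular derivative $\psi'(\tilde\zeta)\to c\in[0,\infty)$ in every proper subsector; in either case $c=0$ or $c>0$ one reads off $|\tilde f(\tilde\zeta)|\asymp|\tilde\zeta|$ and hence $u/|z|=O(1)$ on $S_\alpha$. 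The key device you are missing is this map $\psi$, which converts the implicit constraint ``$f(\zeta)\in H$'' into the rigid algebraic relation $\tilde h'+\tilde g'=1$.
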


\begin{proof}[Proof of Lemma]

Let $f(\zeta)$, $U(\zeta)$ be as above.
So, $u(f(\zeta)) = U(\zeta) = \Re e \, \zeta $.

Let $P:= \{ \zeta: \Re e \, f(\zeta) > 0 \}$ be the preimage of the right halfplane,
and introduce a new variable $\z$ and let $\psi(\z)$ be a conformal map 
from the right half $\z$-plane $H := \{\z: \Re e \, (\z)>0 \}$ onto $P$.

Define
\begin{equation*}
\left\{
\begin{array}{l}
\f(\z) := f(\psi(\z)) \\
\g(\z) := g(\psi(\z)) \\
\h(\z) := h(\psi(\z))
\end{array}\right. 
\end{equation*}

Then $\f$ is a harmonic map, and 
$$ \f(\z) = \h(\z) + \ol{\g(\z)}.$$

We wish to show that for all $|z|>R$ in $S_\alpha$,
$$\frac{u(z)}{|z|} = \frac{U(\zeta)}{|f(\zeta)|} = \frac{\Re e \, \zeta}{|f(\zeta)|} = \frac{\Re e \, \psi(\z)}{|\f(\z) |} < K.$$

Let $\F(\z) = \h(\z) + \g(\z)$ be the analytic function with the same real part as $\f$. 
Then $\Re e \, \F$ is positive in $H$ and vanishes on $\p H$, and therefore, without loss of generality we may write (see \cite[p. 151]{Tsuji})
\begin{equation}\label{eq:F}
 \F(\z) = \z \implies \F'(\z) = 1.
\end{equation}

The proof hinges on (\ref{eq:F}) along with the chain rule combined with (\ref{eq:diff}).  Now, 
$$ \h'(\z) = h'(\psi(\z)) \cdot \psi'(\z). $$
and
\begin{equation}
\label{eq:g'}
 \g'(\z) = - \frac{\psi'(\z)}{h'(\psi(\z)) } = - \frac{\psi'(\z)^2}{\h'(\z)}. 
\end{equation}

Combining this with (\ref{eq:F}) we have
\begin{equation*}
 1 = \F'(\z) = \h'(\z) - \frac{\psi'(\z)^2}{\h'(\z)}  
\end{equation*}
which implies
\begin{equation*}
 \h'(\z)^2 - \h'(\z) - \psi'(\z)^2 = 0.
\end{equation*}
Thus,
\begin{equation}  \label{eq:h'} 
\h'(\z) = \frac{1 + \sqrt{1+4\psi'(\z)^2}}{2}.
\end{equation}

Since $\psi(\z)$ is a conformal map with $\Re e \, \psi(\z) > 0$ in $H$, there exists a real constant
$0 \leq c < \infty$ such that in any sector $S_\beta := \{ \z: |\arg \z | \leq \beta < \pi/2 \}$ the limit $\psi'(\z) \rightarrow c$ exists as $\z \rightarrow \infty$ in $S_\beta$. (see \cite[p. 152]{Tsuji})

{\bf Case 1:} $\psi'(\z) \rightarrow c= 0$ as $\z \rightarrow \infty$ (with $\z$ in $S_\beta$).

From (\ref{eq:h'}) we have $\h'(\z) \rightarrow 1$ as $\z \rightarrow \infty$, and using (\ref{eq:g'}) we have $\g'(\z) \rightarrow 0.$
Thus, $\h(\z) \approx \z$ and $\g(\z) = o(1)$,
which implies that $\f(\z) = \h(\z) + \ol{\g(\z)} \approx \z$.

Since $\f : H \rightarrow H$ is asymptotic to the identity map,
given $\alpha$, we may choose $\beta < \pi/2$ so that $S_\alpha \cap  \{ |z| > R \}$ is contained in the image of the sector $S_\beta$
for $R$ large enough.
Thus, the estimate $\psi'(\z) \rightarrow  0$ applies in the region $S_\alpha$, and we have
$$\frac{u(z)}{|z|}=\frac{\Re e \, \psi(\z)}{|\f(\z) |} < \frac{|\psi(\z)|}{|\f(\z) |} = o(1), \quad \text{for } z \in S_\alpha \cap  \{ |z| > R \},$$
since $\f(\z) \approx \z$, and $\psi'(\z) = o(1).$

{\bf Case 2:} $\psi'(\z) \rightarrow c > 0$ as $\z \rightarrow \infty$.

From (\ref{eq:F}) we have $\Re e \, \{\h(\z) + \ol{\g(\z)} \} = \Re e \, \z$.
Let us also estimate $\Im m \, \f(\z) = \Im m \, \h(\z) - \Im m\, \g(\z)$.
We use (\ref{eq:h'}) and (\ref{eq:g'}):
$$ \h'(\z) \rightarrow \frac{1+\sqrt{1+4c^2}}{2}, $$
$$ \g'(\z) \rightarrow \frac{-2 c^2}{1+\sqrt{1+4c^2}}, $$
which imply
$$\h'(\z) - \g'(\z) \rightarrow \frac{(1+\sqrt{1+4c})^2 + 4c^2}{2(1+\sqrt{1+4c^2})} = 1 + \frac{4c^2}{1+\sqrt{1+4c^2}} .$$
Putting this together, we have
$$ \h(\z) + \ol{\g(\z)} = \Re e \, \z + i \left( 1 + \frac{4c^2}{1+\sqrt{1+4c^2}} + o(1) \right) \Im m\, \z.$$

As in the first case, given $\alpha$, we may thus choose $\beta < \pi/2$ and $R>0$ so that $S_\alpha \cap  \{ |z| > R \}$ is contained in the image $\f(S_\beta)$ of the sector $S_\beta$.
Then we have
$$\frac{u(z)}{|z|} = \frac{\Re e \, \psi(\z)}{|\f(\z) |} < \frac{|\psi(\z)|}{|\f(\z) |} = O(1), \quad \text{for } z \in S_\alpha \cap  \{ |z| > R \}.$$
Indeed, $\displaystyle |\f(\z)| =\left|\Re e \, \z + i\left(1 + \frac{4c^2}{1+\sqrt{1+4c^2}} + o(1)\right) \Im m\, \z \right|$, and 
$\psi'(\z) = O(1) \implies \psi(\z) = O(|\z|).$

\end{proof}

 Applying Lemma \ref{lemma:angle} to two sectors, one rotated clockwise and the other counterclockwise, in order that their union covers $S_\lambda$,
 the upper bound (\ref{eq:bounds}) is established in $S_\lambda$.  It remains to prove the estimate in the rest of $D$.

Let $\pi/2 < \alpha < \lambda$.  We will show that the upper bound (\ref{eq:bounds}) holds in $D \setminus \ol{S_\alpha}$.

In order to prove this, we will apply the following result from \cite[Main Theorem]{Hwang88}:

{\bf Theorem A.}  
Let $\Omega \subset \Omega_1 = \{ (x,y): x>0, -f(x) < y < f(x) \},$ 
where $f,g \in C[0,\infty)$, $f,g \geq 0$, $g(0)=0$, $f(t), g(t)/t$ increase as $t$ increases, and let $u \in C(\ol{\Omega}) \cap C^2(\Omega)$.
Suppose that

i) {div}$\displaystyle{\frac{\nabla u}{\sqrt{1+|\nabla u|^2}}} \geq 0$ in $\Omega$,

ii) $u|_{ \partial \Omega \cap ( \{x\} \times [-f(x),f(x)] )} \leq g(x)$ for $x \in [0,\infty)$,

iii) $0 < \kappa(x) := f(x) / (2 g(x)) < 1$ for all $x$ larger than some $x_1 > 0$,

iv) $\kappa(x)$ is decreasing on $[x_1,\infty)$.

Then $u(x,y) \leq g( x / (1 - \kappa(x)))$ for every $(x,y) \in \Omega$ with $x > x_1$.
\vskip .3truein
We apply this to $\Omega = D \setminus \ol{S_\alpha}$, while taking $\Omega_1 = \CC \setminus \ol{S_\alpha}$.  
In order to relate to the setup in the theorem, reflect these domains about the $y$-axis, 
so that $\Omega$ and $\Omega_1$ are in the right halfplane.
Then $\Omega_1 = \{ (x,y) : x > 0, -f(x) < y < f(x) \}$, where $f(x) = \tan(\pi - \alpha) x$.  If $C>0$ is sufficiently large, then $g(x) = C x(1 - \exp(-x)/2)$ satisfies both (iii) and (iv).
We check that for $C$ large enough, (ii) is also satisfied.  Note that $\p \Omega$ contains points on $\p D$ and points on $\p S_\alpha$.
For points on $\p D$, $u=0$, and for points on $\p S_\alpha$, $u$ has at most linear growth by Lemma \ref{lemma:angle}.
Thus, in both cases (ii) is satisfied, and Theorem A may be applied.
The result is that $u(x,y) \leq g(x / (1 - \kappa(x)))$ for all large enough $x \in \Omega$.
Since 
$$\frac{x}{1 - \kappa(x)} = \frac{x}{1-\tan(\pi-\alpha)/C} (1+o(1)) ,$$
and $\tan(\pi-\alpha)/C$ is a small constant provided $C$ is large,
we have
$$u(x,y) < Cx,$$
for all large enough $x \in \Omega$.
This completes the proof of  (\ref{eq:bounds}).

\subsection{A lower bound}

\begin{prop}\label{prop:lower}
Suppose $D$ is a domain with $\pd D \neq \emptyset$,
and $u(z)>0$ satisfies (\ref{eq:bdryvalueprob}) with $u(z)=0$ on $\pd D$.
Then $u(z)$ has at least logarithmic growth.
\end{prop}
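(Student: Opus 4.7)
The plan is to invoke the Weierstrass parametrization $f = h + \ol{g}: H \to D$ from the proof of Theorem \ref{thm:bound}, under which $u(f(\zeta)) = 2 \Re \zeta$ and $g'(\zeta) = -1/h'(\zeta)$. First, I would observe that $|h'(\zeta)| > 1$ throughout $H$: since $f$ is orientation-preserving, the Jacobian $|h'|^2 - |g'|^2$ is positive, which combined with $|h'g'| = 1$ yields $|h'|^2 > 1$. In particular, $\log|h'|$ is a positive harmonic function on $H$.

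The next step is to apply the Herglotz representation of positive harmonic functions on the right halfplane: there exist $a \geq 0$ and a positive Borel measure $\mu$ on $i\RR$ with $\int d\mu(t)/(1+t^2) < \infty$ such that
\[ \log|h'(\zeta)| = a \Re \zeta + \int \frac{\Re \zeta}{|\zeta - it|^2} \, d\mu(t). \]
Along the real axis $\zeta = x \geq 1$, one uses $x^2 + t^2 \geq 1+t^2$ to bound the Poisson integral by $x \int d\mu(t)/(1+t^2)$, yielding $\log|h'(x)| \leq A x$ for some constant $A > 0$ and all large $x$.

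I would then integrate along the positive real axis. From $|h'(t)| \leq e^{At}$ and $|g'(t)| = 1/|h'(t)| \leq 1$, we get $|h(x) - h(1)| = O(e^{Ax})$ and $|g(x) - g(1)| = O(x)$, whence $|f(x)| \leq C e^{Ax}$. Since $u(f(x)) = 2x \to \infty$ and $u \in C(\ol{D})$ with $u = 0$ on $\pd D$, no subsequence $f(x_n)$ with $x_n \to \infty$ can have a finite accumulation point in $\ol{D}$ (that would force $u$ to blow up there), so $|f(x)| \to \infty$ on the real axis. For each sufficiently large $r$, pick $x(r)$ with $|f(x(r))| = r$; the bound $r \leq C e^{A x(r)}$ gives $x(r) \geq (\log r)/A - O(1)$, so
\[ M(r) \geq u(f(x(r))) = 2 x(r) \geq \tfrac{2}{A} \log r - O(1). \]

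The main hurdle is extracting the linear estimate $\log|h'(x)| = O(x)$ on the real axis, for which the Herglotz representation together with the integrability of $d\mu(t)/(1+t^2)$ is the essential tool; once that is in hand, the remaining calculations are routine. A secondary subtlety is ensuring that $|f(x)|$ is unbounded along the real axis in $H$, which is handled as indicated by the continuity of $u$ on $\ol{D}$.
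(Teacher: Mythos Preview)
Your chain of estimates is sound and gives a genuinely different proof from the paper's. The paper argues by comparison with a catenoid: centering the upper half of a vertical catenoid $G(z;r_1)=r_1\cosh^{-1}(|z|/r_1)$ at a boundary point of $D$, one notes that if $M(r)$ grew slower than logarithmically, then on the boundary of large annular regions $K_R=D\cap B(R,0)\setminus B(\delta,0)$ one would have $G(\cdot;r_1)>u-\e$, and the comparison principle for the minimal surface equation propagates this into the interior. Letting $r_1\to 0$ and then $\e\to 0$ forces $u\le 0$, a contradiction. This uses nothing beyond the maximum principle; no conformal parametrization enters.

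Your Weierstrass/Herglotz route is more in the spirit of the paper's proof of Theorem~\ref{thm:bound}, and the steps $|h'|>1\Rightarrow\log|h'|$ positive harmonic $\Rightarrow\log|h'(x)|\le Ax$ on the positive real axis $\Rightarrow|f(x)|=O(e^{Ax})\Rightarrow M(r)\gtrsim\log r$ are all correct once the halfplane parametrization with $U(\zeta)=2\Re e\,\zeta$ and $g'=-1/h'$ is available. There is, however, a genuine gap in scope: that parametrization (as set up before Theorem~\ref{thm:bound}) relies on the minimal graph being simply connected and on enough boundary regularity for $U$ to vanish continuously on all of $\partial H$, which in the paper is guaranteed by the hypothesis that $\partial D$ is a Jordan arc. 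Proposition~\ref{prop:lower} makes no such assumption; $D$ could for instance be a halfplane with a closed disc removed, and then there is no global univalent harmonic map $f:H\to D$ with height $2\Re e\,\zeta$ to invoke. So your argument, as written, proves the proposition under an extra topological hypothesis, whereas the catenoid barrier argument covers arbitrary $D$ with $\partial D\neq\emptyset$ directly. (A minor side remark: the inequality $|h'|>1$ depends on $f$ being sense-preserving; if the orientation convention went the other way you would have $|g'|>1$ instead, but the argument goes through unchanged with $h$ and $g$ swapped.)
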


\begin{proof}
Without loss of generality assume that $0 \in \pd D$,
and consider the top half of the vertical catenoid centered at $z=0$ as a ``barrier'' (cf. \cite[p. 92]{Osserman})
Explicitly, let $\cosh^{-1}$ denote the positive branch of the inverse of $\cosh :\RR \rightarrow \RR$,
and define
$$G(z;r_1) := r_1 \cosh^{-1} \left( \frac{|z|}{r_1} \right), \quad |z| \geq r_1 .$$
For each $r_1$, $G(z;r_1)$ satisfies (\ref{eq:bdryvalueprob}).

Let  $\e>0$ and choose a $\delta$-neighborhood $B(\delta,0)$ of $z=0$ small enough that $u(z) < \e$ throughout $B(\delta,0) \cap D$.

Define $u_\e(z) = u(z) - \e$.  For $r_1>0$ small enough, $G(|z|;r_1)> u_\e(z)$ on $\pd B(\delta,0) \cap D$.
For $R>0$, let $$K_R :=  D \cap B(R,0)  \setminus B(\delta,0).$$ 

Fix $R=R_0$.  Suppose $\max_{|z|=R} |u(z)|$ grows slower than logarithmically, so it grows slower than $G(|z|,r_1)$.  
Then for $R>R_0$ sufficiently large, $G(|z|;r_1) > u_\e(z)$ on $\pd K_R$.
This implies the same inequality throughout $K_{R_0} \subset K_R$.
In particular, $u_\e(z) < r_1 \cosh^{-1} \left( \frac{R_0}{r_1} \right)$ in $K_{R_0}$.
But $r_1>0$ is arbitrary,
and $r_1 \cosh^{-1} \left( \frac{R_0}{r_1} \right) \rightarrow 0$ as $r_1 \rightarrow 0$.
Thus, $u_\e(z) \leq 0$ in $K_{R_0}$ which implies that $u(z) \leq 0$ since $\e$ was arbitrary.
This contradicts that $u(z)>0$ in $D$.
\end{proof}

\section{Examples}

In this Section, we provide examples that together with the above (and previously known) results give a broad picture
of the possible growth rates of minimal graphs.
One notices three ``regimes'' illustrated in Fig. \ref{fig:growth}.
When $D$ contains a halfplane we find nontrivial examples, but their growth rates appear to be determined by the asymptotic angle $\pi < \beta < 2 \pi$.
This is reminiscent of the behavior of positive harmonic functions,
hence we deem this the ``Phragm\'en-Lindel\"of regime''.  However, the geometry of $D$ plays
a subtle role, since if $D$ is a true sector of opening $\beta$, even in the range $\pi <\beta <2\pi$,
then (\ref{eq:bdryvalueprob}) has only the trivial solution $u\equiv 0$ \cite[p.993]{LLR}.

When $D$ is contained in a sector $\beta < \pi$, we have a ``completely rigid regime'', 
due to Nitsche's theorem.
At the critical angle $\beta = \pi$, an interesting phase transition occurs; 
there are examples with $D$ contained in a halfplane with $\beta = \pi$ exhibiting a full spectrum of possible growth rates anywhere from linear to exponential thus interpolating the known upper and lower bounds.

\begin{figure}[h]
    \begin{center}
    \includegraphics[scale=.22]{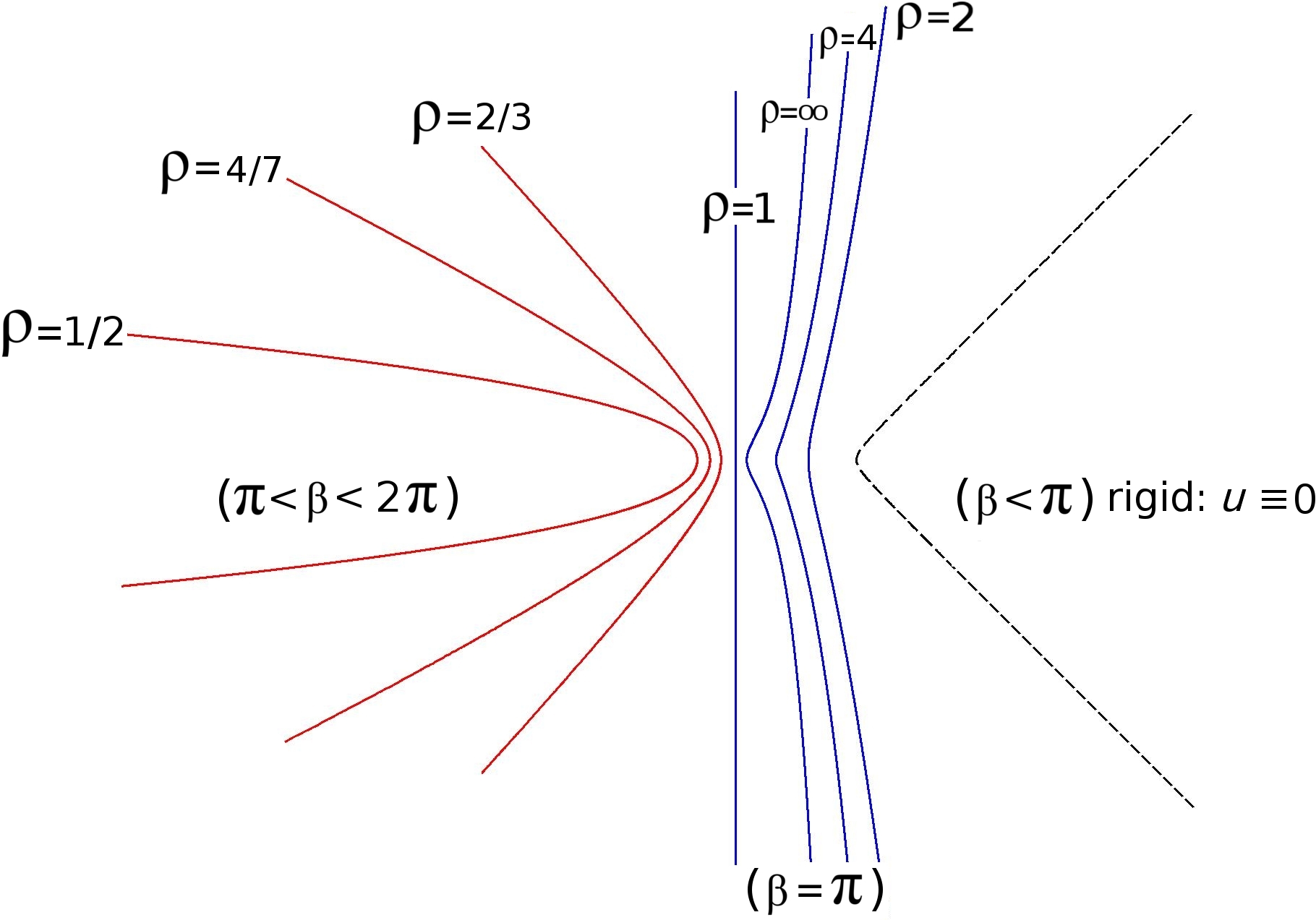}
    \end{center}
    \caption{A plot of the boundary of $D$ labeled with order $\rho$. \todo{Phragm\'en-Lindel\"of regime: $\pi < \beta < 2 \pi$}, \comm{Critical regime: $\beta=\pi$,} and Rigid regime: $\beta < \pi$.
    For the  curves, from left to right the angles are $\beta = 2 \pi$, $7\pi / 4$, and $3 \pi/2$. }
    \label{fig:growth}
\end{figure}

\subsection{Examples in the ``Phragm\'en-Lindel\"of'' regime $\pi < \beta < 2\pi$:}

In \cite{Weitsman2005}, there appears an example of a minimal graph with height function (pulled back to $\zeta$-plane) $U(\zeta) = 2 \Re e\, \zeta$,
and harmonic map from the half plane $H := \{z=x+iy:x>0 \}$
$$ z(\zeta) = \frac{(\zeta + 1)^2}{2} - \log(\bar{\zeta}+1) . $$
This example has asymptotic angle $2 \pi$ and growth of order $1/2$.  (See \S4 for the definition of asymptotic angle.)

Let us demonstrate a whole one-parameter family of examples with asymptotic angles $\pi < \beta < 2 \pi$ having growth of orders $\pi/\beta$.
Let $\gamma = \beta / \pi$ (so $1 < \gamma < 2$).
Then such a minimal surface is given by the harmonic map from the half plane $H$ to a region $D$
$$ z(\zeta) = (\zeta + 1)^{\gamma} - \frac{1}{\gamma(2 - \gamma)}(\bar{\zeta} + 1)^{2 - \gamma}$$
together with the height function $U(\zeta) = 2 \Re e\,  \zeta$.

Assuming $z(\zeta)$ is univalent, then we have growth of order $1 / \gamma = \pi / \beta$ as desired, since
$$\frac{u(z)}{|z|^{1/\gamma}} = \frac{U(\zeta)}{|z(\zeta)|^{1/\gamma}} = \frac{2\Re e \, \zeta}{| (\zeta + 1)^{\gamma} - \frac{1}{\gamma(2 - \gamma)}(\bar{\zeta} + 1)^{2 - \gamma} |^{1/\gamma}} .$$

Thus, the only thing to check is that $z(\zeta)$ is univalent in $H$.
Its Jacobian is 
$$\gamma^2 |\zeta + 1|^{2(\gamma-1)} - \frac{1}{\gamma^2 |\zeta + 1|^{2(\gamma - 1)}} > 0$$
since
$$\gamma^2 |\zeta + 1|^{2(\gamma-1)} > 1 .$$
Thus, global univalence can be ensured by checking the boundary behavior.
We will show that the imaginary part of $z(\zeta)$ is increasing on the boundary $\zeta = it$, $-\infty < t < \infty$.
The imaginary part of $z(it)$ is
$$ \Im m \, \{ z(it) \} = (1+t^2)^{\gamma / 2} \sin(\gamma \tan^{-1} t) + \frac{1}{\gamma(2 - \gamma)} (1+t^2)^{(2-\gamma)/2} \sin( (2-\gamma) \tan^{-1} t).$$
This is an odd function, so we just consider the interval $0 < t < \infty$.
The second term is increasing, since it is a product of increasing functions.
Indeed, $0 < 2 - \gamma < 1$ so that $0 < (2-\gamma) \tan^{-1} t < \pi / 2$ for $0 < t < \infty$
so that $\sin( (2-\gamma) \tan^{-1} t)$ is increasing.
In order to show that $(1+t^2)^{\gamma / 2} \sin(\gamma \tan^{-1} t)$ is increasing, we check that the derivative
$$ \gamma(1+t^2)^{\gamma / 2 - 1} t \sin(\gamma \tan^{-1} t) + \gamma(1+t^2)^{\gamma / 2 - 1} \cos(\gamma \tan^{-1} t)$$
is positive, or equivalently that
$$t \sin(\gamma \tan^{-1} t) + \cos(\gamma \tan^{-1} t) > 0 .$$
For this let $0 < \theta < \pi / 2$ and take $t = \tan \theta$.
Then we see that
$$\tan \theta \sin(\gamma \theta) + \cos(\gamma \theta) = \frac{\cos (\gamma-1) \theta}{\cos \theta}, $$
which is positive since $0 < \theta < \pi / 2$ and $1 < \gamma < 2$.

\subsection{The critical angle $\beta = \pi$:  Examples from linear growth to exponential.}

A plane and a horizontal catenoid sliced by a plane parallel to its axis provide two examples of minimal graphs over a domain contained in a half plane.  
These examples have linear and exponential growth respectively.

For each given $\rho>1$,
we provide an example contained in a halfplane (each having asymptotic angle $\beta=\pi$) with order of growth $\rho$.
Let $b=1/\rho$.  Then, once again, $z(\zeta)$ has the form 
$$ z(\zeta) = h(\zeta) - \ol{\int \frac{1}{h'(\zeta)}d\zeta},$$
so that $U(\zeta) = 2 \Re e \, \zeta$.

Taking $h(\zeta) = \zeta + \frac{1}{b}\zeta^b$,
$$ z(\zeta) = \zeta + \frac{1}{b}\zeta^b - \bar{\zeta} + \ol{\int \frac{1}{1+\zeta^{1-b}}d\zeta},$$

Assuming $z(\zeta)$ is univalent, $u(z)$ has order $\rho$, since
$$\frac{u(z)}{|z|^{\rho}} = \frac{U(\zeta)}{|z(\zeta)|^{\rho}} = \frac{2\Re e \, \zeta}{|z(\zeta) |^{\rho}},$$
which tends to a constant on the real axis.

It remains to check that $z(\zeta)$ is univalent in $H$.
Its Jacobian is 
$$|1 + \zeta^{b-1}|^2 - \frac{1}{|1 + \zeta^{b-1}|^2} > 0$$
since
$$|1 + \zeta^{b-1}|^2 > 1, \quad \text{for } \zeta \in H. $$
Thus, global univalence can be ensured by checking the boundary behavior.
As in the previous examples we show that $\Im m \,  \{ z(\zeta) \}$ is increasing on the boundary $\zeta = it$, $-\infty < t < \infty$.
This is an odd function, so we just consider the interval $0 < t < \infty$.
It suffices to show that the derivative 
$$\frac{d}{dt} \Im m \, \{ z(it) \}$$ 
is positive.
We use the identity
$$ \frac{d}{dt} \Im m \,  \{ z(it) \} = \frac{d}{dt} \Im m \,  \{ h(it)\} - \frac{d}{dt}\Im m \,  \{ g(it) \} =  \Re e \, \{ h'(it) \} - \Re e \, \{ g'(it) \} ,$$
to compute
\begin{equation*}
\begin{aligned}
\frac{d}{dt} \Im m \,  \{ z(it) \} &= 1 + \Re e \, \frac{1}{(it)^{1-b}} + 1 - \Re e \, \frac{1}{1+(it)^{1-b}} \\
&> 2 -  \frac{1}{1+\Re e \, \{ (it)^{1-b} \}} \\
&> 1.
 \end{aligned}
 \end{equation*}
We note that the domain $D$ for this example has a corner at the point $z(0)$.
This can be removed by shifting the minimal graph $(x,y,u(x,y))$ in the negative $u$-direction.

\section{Problems and conjectures}

\quad I.  When dealing with a nonlinear equation, issues of existence and uniqueness are often complex.  A survey
of uniqueness results can be found in \cite{Hwang03}.  A natural question to ask here is

{\bf Problem 1.}  Is it possible for (\ref{eq:bdryvalueprob}) to have more than one nontrivial solution?
\vskip .1truein
\quad II.  As discussed in the introduction, for domains $D$ contained in the half plane, at least when bounded by a Jordan
arc, the growth of solutions to (\ref{eq:bdryvalueprob}) is at most exponential.  However, it seems likely that this is true in general.

{\bf Problem 2.}  If $u$ is a solution to (\ref{eq:bdryvalueprob}), then does its maximum $M(r)$ satisfy
$$
M(r)\leq e^{Cr}  \quad (r>r_0),
$$
for some positive constants $C$ and $r_0$

\quad III.  In the case where $D$ contains a half plane, we have been unable to ascertain a good upper bound for the maximum.  However, it seems reasonable to conjecture that Theorem 2.1 holds here as well.

{\bf  Problem 3.}  If $u$ is a solution to (\ref{eq:bdryvalueprob}) and $D$ contains a half plane, then is it true that
$$
M(r)\leq Cr \quad (r>r_0)
$$
for some positive constants $C$ and $r_0$?
\vskip .1truein
\quad IV.  In this paper we have shown that if $D$ contains a sector of opening $\alpha >\pi$, then any nontrivial
solution has order at most 1.  However, it seems likely that this might be be improved.

{\bf Problem 4.}  If $D$ contains a sector of opening $\alpha >\pi$, then is it true that the order of any
nontrivial solution to
(\ref{eq:bdryvalueprob}) is bounded above by  $\pi /(2\pi - \alpha )$?  The interpretation as with the 
minimum bound discussed in \S1 has the case $\alpha - 2\pi$ taken to mean that the omitted set is a 
line.
\vskip .1truein
\quad V.  The results in \cite{Weitsman2005} are phrased in terms of the \emph{asymptotic angle} $\beta$
defined as follows.  Let $\Theta (r)$ be the angular measure of the set $D\cap\{|z|=r\}$, and 
$\Theta^*(r)=\Theta (r)$ if $D$ does not contain the circle $|z|=r$, and $+\infty$ otherwise.  Then 
$$
\beta =\underset{r\to\infty}{\lim\ \sup} \ \Theta^*(r).
$$
Consideration of the case $\beta=2\pi$ raises the following question

{\bf Problem 5.}  If $D$ is an unbounded region bounded by a Jordan arc (taken to mean a proper curve
which does not self intersect or close), then is it true that the  maximum of a nontrivial solution  satisfies
$$
M(r)\geq C\sqrt{r}\quad( r>r_0)
$$
for some positive constants $C$ and $r_0$?
\vskip .1truein
\quad VI.  Returning to Nitsche's theorem as mentioned in \S1, in terms of the asymptotic angle $\beta$ it seems likely
that a corresponding result should hold.

{\bf Problem 6.} If $D$ has asymptotic angle $\beta <\pi$, and $u$ is a solution to (\ref{eq:bdryvalueprob}), then
must it be that $u\equiv 0$?

\bigskip

\bibliographystyle{amsplain}

\end{document}